\documentclass[11pt]{amsart}

\usepackage{amsmath,amssymb,amsthm,mathtools}
\usepackage{enumitem}
\usepackage[colorlinks=true,citecolor=blue,linkcolor=blue,urlcolor=blue]{hyperref}

\newtheorem{theorem}{Theorem}[section]
\newtheorem{proposition}[theorem]{Proposition}
\newtheorem{lemma}[theorem]{Lemma}

\theoremstyle{definition}

\theoremstyle{remark}

\newcommand{\R}{\mathbb R}

\newcommand{\Euc}{\mathrm{Euc}}

\title[STABLE CONSTANT WEIGHTED MEAN CURVATURE HYPERSURFACES]{STABLE CONSTANT WEIGHTED MEAN CURVATURE\\ HYPERSURFACES IN ANTI-GAUSSIAN SPACE}

\author[Bai]{Jinchuan Bai}
\address[J.B.]{School of Mathematical Sciences\\
Xiamen University\\
361005\\
Xiamen\\
P. R.  China}
\email{baijinchuan@stu.xmu.edu.cn
}

\author[Xia]{Chao Xia}
\address[C.X.]{School of Mathematical Sciences\\
Xiamen University\\
361005\\
Xiamen\\
P. R.  China}
\email{chaoxia@xmu.edu.cn}
\thanks{This work is supported by
NSFC (Grant No.  12271449, 12526203, 12526102) and the Natural Science Foundation of
Fujian Province of China (Grant No.  2024J011008).}

\begin{document}

\begin{abstract}
We prove that in the anti-Gaussian space $\bigl(\R^{m+1},\bar g_{\Euc},e^{|x|^2/4}\,dx\bigr)$, $m\ge2$, round spheres centered at the origin are the only closed, connected, two-sided immersed hypersurfaces with constant weighted mean curvature that are stable under weighted-volume-preserving variations.
\end{abstract}

\maketitle

\section{Introduction}
Isoperimetric problem asks for regions of least perimeter enclosing a fixed volume. It is well-known that isoperimetric regions in the Euclidean space are round balls.  Isoperimetric problems have also been studied extensively in the setting of manifolds with density
$
\bigl(M^{m+1},\bar g,e^{-\phi}d\mu_{\bar g}\bigr),$ where $\phi$ is a smooth function on $M$ and $\mu_{\bar g}$ is the Riemannian volume form.
It seeks regions $\Omega$ of least weighted perimeter $\int_{\partial \Omega} e^{-\phi}d\sigma_{\bar g}$ enclosing a fixed weighted volume $\int_{\Omega} e^{-\phi}d\mu_{\bar g}$.

One of the first and most interesting examples, with applications in probability and statistics, is the Gaussian space $$
\bigl(\mathbb{R}^{m+1},\bar g_{Euc},e^{-\frac{|x|^2}{4}}dx\bigr).$$ Sudakov-Tirel'son \cite{ST} and Borell \cite{Borell} independently proved that the Gaussian isoperimetric regions are given by half-spaces. In a recent joint work of the authors with Lu \cite{BaiLuXia}, the
capillary isoperimetric problem in a Gaussian half-space was studied,
and the minimizers were shown to be the half-spaces meeting the supporting hyperplane at the prescribed contact angle.
Rosales-Canete-Bayle-Morgan \cite{RosalesCaneteBayleMorgan} have studied Euclidean space with general density.
Among other results, they showed that the isoperimetric regions in the anti-Gaussian space
 $$\bigl(\mathbb{R}^{m+1},\bar g_{Euc},e^{\frac{|x|^2}{4}}dx\bigr)$$ are given by balls centered at the origin. Later, the isoperimetric problem for general radial log-convex densities, due to Brakke, has been solved by Chambers \cite{Chambers}.

It is also interested to consider stable critical points or local minimizers for isoperimetric problems. 
Barbosa-do Carmo \cite{BarbosaDoCarmo} showed that the only stable closed solutions to isoperimetric problems in the Euclidean space $\mathbb{R}^{m+1}$ are round spheres. Combined with the stability criteria by Sternberg-Zumbrun \cite{SternbergZumbrun}, it shows that local minimizers are round spheres.
McGonagle-Ross \cite{McGonagleRoss} showed that the only stable smooth complete solutions to isoperimetric problems in the Gaussian space are hyperplanes. 

In this paper, we consider stable critical points to isoperimetric problems in the anti-Gaussian space.
 The critical points for isoperimetric problems in the anti-Gaussian space are given by constant weighted mean curvature hypersurfaces, that is
\[
H+\frac12\langle x,\nu\rangle=\lambda, \quad \lambda\in \mathbb{R}.
\]
When $\lambda=0$, it is so-called self-expander to mean curvature flow, which has been well studied, see e.g. \cite{BernsteinWang,DeruelleSchulze,Ding}. For general $\lambda$, it is also referred to as $\lambda$-self-expander, see e.g. Ancari-Cheng \cite{AncariCheng}. An Alexandrov-type theorem, saying that an closed embedded
$\lambda$-self-expander must be a sphere centered at the origin, has been proved independently by the authors \cite{BaiXia} and Johne-Silini \cite{JohneSilini}.

A constant weighted mean curvature hypersurface is called stable if the second variation of weighted perimeter is nonnegative among weighted volume preserving variations. We recall the first- and second-variation formulas in Section \ref{sec:variational}.
Rosales-Canete-Bayle-Morgan \cite{RosalesCaneteBayleMorgan} proved that a sphere centered at the origin is stable. 
The main result is the following.

\begin{theorem}\label{thm:main}
A stable, smooth, closed, connected, two-sided immersed constant weighted mean curvature hypersurface in  the anti-Gaussian space is a round sphere centered at the origin. As a consequence, 
local minimizers of the isoperimetric problems in the anti-Gaussian space are  round balls centered at the origin.
\end{theorem}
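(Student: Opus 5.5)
The plan is to follow Barbosa--do Carmo: build a test function with zero weighted mean out of geometric quantities on $\Sigma$, and show that the stability inequality forces the rigidity conditions $A^{\circ}\equiv0$ and $x^T\equiv0$.

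\textbf{Set-up.} Write $\phi=-|x|^2/4$, so the density is $e^{-\phi}=e^{|x|^2/4}$ and $\nabla^2\phi=-\tfrac12\bar g$. Let $\Delta_\phi u=\Delta u+\tfrac12\langle x^T,\nabla u\rangle$ be the drift Laplacian on $\Sigma$ and $d\mu=e^{|x|^2/4}dA$. By the second-variation formula of Section~\ref{sec:variational}, stability means
\[
Q(u)=\int_\Sigma\Bigl(|\nabla u|^2-\bigl(|A|^2-\tfrac12\bigr)u^2\Bigr)d\mu\ \ge 0
\quad\text{whenever}\quad \int_\Sigma u\,d\mu=0 .
\]
Equivalently $Q(u)=-\int uLu\,d\mu$ with $L=\Delta_\phi+|A|^2-\tfrac12$.

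\textbf{Jacobi-type identities.} First I would compute $L$ on the natural geometric functions, using $\nabla H=-\tfrac12 A(x^T)$, which follows from differentiating $H+\tfrac12\langle x,\nu\rangle=\lambda$.
\begin{itemize}
\item For $\nu_i=\langle\nu,e_i\rangle$: the drift term cancels the $\nabla H$ term, giving $\Delta_\phi\nu_i+|A|^2\nu_i=0$. Hence $L\nu_i=-\tfrac12\nu_i$, so these directions are strictly stable and are not the right test functions. This is the opposite of the Gaussian case of McGonagle--Ross.
\item For the support function $s=\langle x,\nu\rangle$: I expect $\Delta_\phi s=-|A|^2s+H+\tfrac12 s+\langle x^T,\nabla H\rangle+\ldots$ together with $\nabla H=-\tfrac12A(x^T)$, which gives
\[
Ls=H+\lambda\text{-terms}-\tfrac12\,A(x^T,x^T)\text{-type terms}.
\]
\item $L1=|A|^2-\tfrac12$.
\item For $\rho=|x|^2/4$: $\Delta_\phi\rho=\tfrac m2-\tfrac12Hs+\tfrac14|x^T|^2$.
\end{itemize}
Using the constraint $H=\lambda-\tfrac12 s$, all of these are expressed through $s$, $|x^T|^2$ and $|A|^2$.

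\textbf{Test function.} The second step is to choose $u$ as a linear combination of $1$ and $s$, possibly multiplied by a power of $e^{-|x|^2/4}$, with $\int u\,d\mu=0$.
\begin{itemize}
\item My first try is $u=e^{-|x|^2/4}(m-Hs)$. Its weighted mean vanishes by the unweighted Minkowski formula $\int_\Sigma(m-Hs)\,dA=0$. This is the direct analogue of Barbosa--do Carmo's $u=m-Hs$.
\item If that proves too messy, I would instead take $u=s-c$ with $c=\int s\,d\mu/\int d\mu$, and look for a weighted Minkowski identity from integrating $\operatorname{div}_\phi(x^T)=m-Hs+\tfrac12|x^T|^2$ against $d\mu$.
\end{itemize}
Plugging $u$ into $Q$ and integrating by parts with the identities above, the aim is
\[
Q(u)=-\int_\Sigma\Bigl(\bigl(|A|^2-\tfrac{H^2}{m}\bigr)\cdot(\text{positive weight})+\tfrac12|x^T|^2\cdot(\text{positive})\Bigr)d\mu\ \le 0 .
\]
Combined with stability, this forces $|A|^2=H^2/m$ and $x^T\equiv0$. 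Hence $\Sigma$ is totally umbilic and $|x|$ is constant, so $\Sigma$ is a round sphere centered at the origin. By Rosales--Ca\~nete--Bayle--Morgan such spheres are indeed stable. For the consequence about local minimizers, the boundary of a local minimizer is a stable critical point; regularity is automatic in low dimensions, and in general I would appeal to the Sternberg--Zumbrun-type argument cited in the introduction.

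\textbf{Main obstacle.} The hard step is the choice of test function so that the cross terms in $Q(u)$ have a sign. Since $\nabla^2\phi$ is negative here, the $\nu_i$ no longer help, and the Gaussian factor $e^{\pm|x|^2/4}$ produces gradient terms $\tfrac12 u\langle x^T,\nabla\cdot\rangle$ that must be absorbed. My first attempt would be to keep the weight exponent $\alpha$ in $u=e^{-\alpha|x|^2}(m-Hs)$ as a free parameter and tune $\alpha$ (and possibly add a multiple of $s$) so that the leftover terms assemble into $|A^\circ|^2$ and $|x^T|^2$ with nonnegative coefficients.
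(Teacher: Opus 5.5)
Your proposal stops exactly where the proof has to happen. No admissible test function is ever shown to satisfy $Q(u)\le0$, and the displayed ``aim'' only restates the desired conclusion. Neither candidate is carried through, and both run into terms with no sign.

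First, your expectation for the support function $s=\langle x,\nu\rangle$ is off. The drift term $\frac12\langle x^\top,\nabla s\rangle=\frac12A(x^\top,x^\top)$ cancels $\langle x^\top,\nabla H\rangle=-\frac12A(x^\top,x^\top)$, so $L_\phi s=H_\phi-s$ exactly. With this, for $u=s-c$ one gets
\[
Q(s-c)=\int_\Sigma(s-c)^2\,d\sigma_\phi+c\int_\Sigma(s-c)|A|^2\,d\sigma_\phi,
\]
whose first term has the wrong sign.

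For $u=e^{-|x|^2/4}w$ with $w=m-Hs$ (here $H$ is not constant), a direct computation gives
\[
Q(u)=-\int_\Sigma e^{-|x|^2/4}\,w\Bigl[\Delta w-\tfrac12\langle x^\top,\nabla w\rangle+\bigl(|A|^2-\tfrac12\bigr)w-\tfrac12w^2\Bigr]d\sigma .
\]
This contains the cubic term $\frac12\int_\Sigma e^{-|x|^2/4}w^3\,d\sigma$, which has no sign and which nothing in your plan controls. For $\alpha\neq\frac14$ the zero-mean condition is no longer automatic, so tuning $\alpha$ does not help. Also, umbilicity is not needed: the argument only has to produce $x^\top\equiv0$.

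The missing idea is to use as test function the integrand of the weighted Minkowski identity you wrote down yourself:
\[
F=\operatorname{div}_\phi x^\top=m-Hs+\tfrac12|x^\top|^2=V-H_\phi s,\qquad V=m+\tfrac12|x|^2 .
\]
It has zero weighted mean. With $P=|A|^2+\frac12$, one has $L_\phi F=PV-H_\phi^2\ge\frac12P|x^\top|^2$ by Cauchy--Schwarz.

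Unlike in Barbosa--do Carmo, the cross term in $Q(F,F)=-\int VL_\phi F+H_\phi\int sL_\phi F$ does not vanish, because $V$ is not constant. Integrating $V\Delta_\phi s$ by parts with $\nabla V=x^\top$ and using Minkowski gives $\int sL_\phi F\,d\sigma_\phi=\int A(x^\top,x^\top)\,d\sigma_\phi$.

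The paper absorbs this term with the pointwise inequality $V(PV-H_\phi^2)\ge H_\phi A(x^\top,x^\top)$. It comes from the identity
\[
V^2P-VH_\phi^2=\sum_{i,j}(Vh_{ij}-H_\phi\delta_{ij})^2+\tfrac12(V-sH_\phi)^2+\tfrac12|x^\top|^2H_\phi^2,
\]
written in a frame with $e_1=x^\top/|x^\top|$, followed by completing the square in $h_{11}$. This step uses $m\ge2$. The inequality is strict where $x^\top\ne0$, since $H_\phi\ne0$ by Minkowski.

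It follows that $x^\top\equiv0$ and $|x|$ is constant. Then $x$ is a covering of the simply connected $S^m_R(0)$, hence an embedded centered sphere.

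Your treatment of local minimizers is also too thin. You still need:
\begin{enumerate}
\item compactness of $\operatorname{spt}|D\chi_E|$, which is not automatic since the density grows at infinity;
\item the Sternberg--Zumbrun cutoff, to run the same computation for $F$ across the singular set;
\item an argument excluding several concentric spheres and the complement of a ball.
\end{enumerate}
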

We illustrate the idea of the proof. 

For a closed constant weighted mean curvature immersed hypersurface $\Sigma$,
the stability yields that
$$Q(\psi,\psi)=-\int_\Sigma \psi\left[\Delta_\phi \psi+(|A|^2-\frac12)\psi\right]d\sigma_\phi\ge 0,\quad \int_\Sigma \psi d\sigma_\phi= 0,$$
where $\Delta_\phi=\Delta+\frac12\left<x,\nabla \right>$.
To prove the theorem, as in \cite{BarbosaDoCarmo, BarbosaDoCarmoEschenburg, WangXia}, we use the Minkowski formula
 $$\int_\Sigma (V-H_\phi \left<x, \nu\right>)d\sigma_\phi=0,$$
 where $V=m+\frac12|x|^2$ and $H_\phi=H+\frac12\left<x,\nu\right>.$  Thus $\psi=V-H_\phi \left<x, \nu\right>$ is an admissible function in the stability inequality. By using $\psi$, we get
 $$Q(\psi,\psi)=-\int_\Sigma (V-H_\phi \left<x, \nu\right>)\left[(|A|^2+\frac12)V-H_\phi^2\right]d\sigma_\phi\ge 0.$$
It is known by the authors \cite{BaiXia} that 
$$(|A|^2+\frac12)V-H_\phi^2\ge 0.$$
However, there is in general no sign for $V-H_\phi \left<x, \nu\right>$.
In previous papers \cite{BarbosaDoCarmo, BarbosaDoCarmoEschenburg, WangXia}, one usually added some $\delta_\S \Phi$ to stability inequality which has vanishing integral but can compensate pointwisely good terms. In this paper, we use a new way to handle the problem.
Our key observation is that 
 $$\int_\Sigma H_\phi \left<x, \nu\right>\left[(|A|^2+\frac12)V-H_\phi^2\right]d\sigma_\phi=\int_\Sigma H_\phi A(x^T, x^T)d\sigma_\phi,$$
and a key pointwise algebraic lemma tells that $$V\left[(|A|^2+\frac12)V-H_\phi^2\right]-H_\phi A(x^T, x^T)\ge 0.$$
This yields the proof for smooth hypersurfaces.
Combined with the stablity criteria by Sternberg-Zumbrun \cite{SternbergZumbrun} and the regularity theory for local minimizers \cite{GMT, MorganRegularity}, the same proof works for local minimizers for isoperimetric problems.

Related results on stability and isoperimetric problems can be found in
\cite{Alexandrov,BarbosaDoCarmoEschenburg,Montiel,RitoreRos,MorganRos}.
For manifolds with density, related isoperimetric and stability problems have been studied in
\cite{FigalliMaggi,MorganDensity,RosalesStableSets}.
The free boundary and capillary cases have also been considered in
\cite{BaiLuXia, CastroRosales,GuoWangXia, Souam,WangXia}.

The paper is organized as follows. In Section \ref{sec:preliminaries}, we recall the variational formulas, the weighted Minkowski formula, and the basic differential identities in the anti-Gaussian space. In Section \ref{sec:proof}, we prove Theorem \ref{thm:main}. A short compactness lemma for local minimizers is given in Appendix \ref{app:localmin}.

\section{Preliminaries for hypersurfaces in the anti-Gaussian space}\label{sec:preliminaries}\label{sec:variational}

Let $m\ge 2$ and $x:\Sigma^m\to\mathbb R^{m+1}$ be a closed, two-sided immersed hypersurface with globally defined unit normal $\nu$. We use the convention
\[
A(X,Y)=\langle\overline{\nabla}_X\nu,Y\rangle,
\qquad H=\operatorname{tr}_\Sigma A,
\]
where $A$ and $H$ are the second fundamental form and the mean curvature of $\Sigma$, respectively.

Throughout this section the ambient space is the anti-Gaussian space
\[
\bigl(\mathbb R^{m+1},\bar g_{\mathrm{Euc}},e^{|x|^2/4}\,dx\bigr).
\]

Thus
\[
\phi=-\frac{|x|^2}{4},
\qquad
d\sigma_\phi=e^{|x|^2/4}\,d\sigma,
\qquad
H_\phi=H+\frac12\langle x,\nu\rangle.
\]
The weighted Laplacian and the Jacobi operator are
\begin{equation}\label{eq:operators}
\Delta_\phi f
=\Delta f+\frac12\langle x^\top,\nabla f\rangle,
\qquad
L_\phi f
=\Delta_\phi f+\left(|A|^2-\frac12\right)f.
\end{equation}

The standard first and second variation formulas for hypersurfaces with density can be found, for example, in \cite{CastroRosales,McGonagleRoss,RosalesCaneteBayleMorgan}. Let $x_t$ be a smooth normal variation of $x$ with normal speed
\[
\psi=\left\langle \left.\frac{\partial x_t}{\partial t}\right|_{t=0},\nu\right\rangle.
\]
Since $\Sigma$ is only assumed to be immersed, by weighted volume we
mean the signed weighted volume associated with the variation.
A variation is weighted-volume-preserving if this signed volume is
constant; in particular,
\begin{equation}\label{eq:mean-zero}
\int_\Sigma \psi\,d\sigma_\phi=0.
\end{equation}
Conversely, every smooth function satisfying \eqref{eq:mean-zero} can be realized as the normal speed of a weighted-volume-preserving variation, see e.g. \cite{BarbosaDoCarmoEschenburg}. The first variation of the weighted area is
\begin{equation}\label{eq:first-variation}
\left.\frac{d}{dt}\right|_{t=0}\operatorname{Area}_\phi(\Sigma_t)
=\int_\Sigma H_\phi\psi\,d\sigma_\phi.
\end{equation}
Hence a hypersurface is a critical point of the weighted area under the weighted-volume constraint if and only if $H_\phi$ is constant. For such a hypersurface, the second variation is
\begin{equation}\label{eq:second-variation}
Q(\psi,\psi)
=-\int_\Sigma \psi L_\phi\psi\,d\sigma_\phi
=\int_\Sigma\left(|\nabla\psi|^2-\left(|A|^2-\frac12\right)\psi^2\right)d\sigma_\phi.
\end{equation}
Thus stability means
\begin{equation}\label{eq:stability}
Q(\psi,\psi)\ge0
\qquad\text{for every }\psi\in C^\infty(\Sigma)
\text{ satisfying }\int_\Sigma\psi\,d\sigma_\phi=0.
\end{equation}

We next recall the weighted Minkowski formula and several elementary identities for constant weighted mean curvature hypersurfaces. These formulas are standard in the study of $\lambda$-self-expanders; see, for example, \cite{AncariCheng,BaiXia, JohneSilini}. For completeness, we include the short computations.

Set
\begin{equation}\label{eq:basic-definitions}
V:=m+\frac{|x|^2}{2},
\qquad
P:=|A|^2+\frac12,
\qquad
F:=V-H_\phi \langle x,\nu\rangle.
\end{equation}
Then $L_\phi=\Delta_\phi+P-1$.

\begin{lemma}[Weighted Minkowski formula]\label{lem:Minkowski}
Let $\Sigma$ be a closed two-sided immersed hypersurface with constant $H_\phi$. Then
\begin{equation}\label{eq:Minkowski}
\int_\Sigma \bigl(V-H_\phi \langle x,\nu\rangle\bigr)\,d\sigma_\phi=0.
\end{equation}
\end{lemma}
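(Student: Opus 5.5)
We compute weighted Laplacians of the two natural functions $\tfrac12|x|^2$ and $\langle x,\nu\rangle$ on $\Sigma$, and combine them so that everything reduces to the divergence of a weighted gradient. Since $\Sigma$ is closed, $\int_\Sigma \Delta_\phi f\,d\sigma_\phi=0$ for every smooth $f$. This holds because $\Delta_\phi f\, e^{|x|^2/4}=\operatorname{div}_\Sigma\bigl(e^{|x|^2/4}\nabla f\bigr)$, using $\nabla e^{|x|^2/4}=\tfrac12 x^\top e^{|x|^2/4}$. So it suffices to show that $V-H_\phi\langle x,\nu\rangle=\Delta_\phi f$ for a suitable $f$.

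First, on $\Sigma$ we have $\Delta\tfrac12|x|^2=m+\langle \Delta x, x\rangle$. With the convention $A(X,Y)=\langle\overline\nabla_X\nu,Y\rangle$, the identity $\Delta x=-H\nu$ holds, so $\Delta\tfrac12|x|^2 = m-H\langle x,\nu\rangle$. The gradient is $\nabla\tfrac12|x|^2=x^\top$, which gives $\tfrac12\langle x^\top,x^\top\rangle=\tfrac12(|x|^2-\langle x,\nu\rangle^2)$. Hence
\[
\Delta_\phi \tfrac12|x|^2 = m+\tfrac12|x|^2 - \langle x,\nu\rangle\bigl(H+\tfrac12\langle x,\nu\rangle\bigr)=V-H_\phi\langle x,\nu\rangle .
\]
In fact no constancy of $H_\phi$ is even needed at this stage.

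Therefore $V-H_\phi\langle x,\nu\rangle=\Delta_\phi\bigl(\tfrac12|x|^2\bigr)$. Integrating against $d\sigma_\phi$ over the closed surface $\Sigma$ and applying the divergence theorem yields \eqref{eq:Minkowski}.

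The only real obstacle is sign bookkeeping. We must check $\Delta x=-H\nu$ under the paper's convention for $A$: indeed $\langle \nabla_{e_i}\nabla_{e_i}x,\nu\rangle=-\langle e_i,\overline\nabla_{e_i}\nu\rangle$, which sums to $-H$. We must also verify that the drift term in $\Delta_\phi$ has the $+\tfrac12$ sign matching the density $e^{|x|^2/4}$. Both checks are immediate, so the proof is a two-line computation.
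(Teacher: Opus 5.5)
Your proof is correct and is essentially the paper's argument. The paper computes $\operatorname{div}_{\Sigma,\phi}x^\top=V-H_\phi\langle x,\nu\rangle$ and integrates; since $x^\top=\nabla\tfrac12|x|^2$, this is exactly your identity $\Delta_\phi\tfrac12|x|^2=V-H_\phi\langle x,\nu\rangle$, and you are right that constancy of $H_\phi$ is not used (your opening mention of $\langle x,\nu\rangle$ is simply never needed).
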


\begin{proof}
For a tangent vector field $Y$,
\[
\operatorname{div}_{\Sigma,\phi}Y
=\operatorname{div}_\Sigma Y+\frac12\langle x^\top,Y\rangle.
\]
Since
\[
\operatorname{div}_\Sigma x^\top=m-H\langle x,\nu\rangle,
\]
we obtain
\[
\operatorname{div}_{\Sigma,\phi}x^\top
=m-H\langle x,\nu\rangle+\frac12|x^\top|^2
=V-H_\phi \langle x,\nu\rangle.
\]
Integrating over the closed hypersurface gives \eqref{eq:Minkowski}.
\end{proof}

\begin{lemma}[Basic identities]\label{lem:differential}
Assume that $H_\phi$ is constant. Then
\begin{align}
\nabla V&=x^\top, \label{eq:grad-identities}\\
\Delta_\phi V&=V-H_\phi \langle x,\nu\rangle, \label{eq:DeltaV}\\
\Delta_\phi \langle x,\nu\rangle&=H_\phi-P\langle x,\nu\rangle, \label{eq:Deltau}\\
L_\phi V&=PV-H_\phi \langle x,\nu\rangle, \label{eq:LV}\\
L_\phi \langle x,\nu\rangle&=H_\phi-\langle x,\nu\rangle, \label{eq:Lu}\\
L_\phi F&=PV-H_\phi^2. \label{eq:LF}
\end{align}
\end{lemma}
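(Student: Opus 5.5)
The plan is to compute directly with the support function $u:=\langle x,\nu\rangle$ and the tangential position field $x^\top=x-u\nu$. The identities for $V$ and $u$ are obtained from the standard Euclidean formulas for $\Delta x$ and $\Delta u$. The constancy of $H_\phi$ is used only once, to cancel a gradient-of-$H$ term. The identities \eqref{eq:LV}, \eqref{eq:Lu}, \eqref{eq:LF} then follow algebraically from $L_\phi=\Delta_\phi+P-1$.

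\textbf{Identities for $V$.} Since $\overline\nabla\frac{|x|^2}{2}=x$, its tangential projection gives $\nabla V=x^\top$, which is \eqref{eq:grad-identities}. With the sign convention $A(X,Y)=\langle\overline\nabla_X\nu,Y\rangle$ we have $\Delta x=-H\nu$; this can be checked on a sphere with outward normal. Hence
\[
\Delta\tfrac{|x|^2}{2}=m+\langle x,\Delta x\rangle=m-Hu,
\]
and therefore
\[
\Delta_\phi V=m-Hu+\tfrac12|x^\top|^2 .
\]
Writing $|x^\top|^2=|x|^2-u^2$, this equals $m+\tfrac12|x|^2-Hu-\tfrac12u^2=V-H_\phi u$, which is \eqref{eq:DeltaV}.

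\textbf{Identities for $u$.} Differentiating gives $\nabla_X u=\langle X,\nu\rangle+\langle x,\overline\nabla_X\nu\rangle=A(X,x^\top)$, so $\nabla u=A(x^\top)$. The tangential part of $\nabla_X x^\top$ is $X-uA(X)$. Taking the divergence and using the Codazzi equation (so that $\operatorname{div}A=\nabla H$), we get
\[
\Delta u=\langle\nabla H,x^\top\rangle+H-|A|^2u .
\]
This is the one step where the hypothesis enters. Because $H_\phi=H+\tfrac12 u$ is constant, $\nabla H=-\tfrac12\nabla u=-\tfrac12A(x^\top)$. Thus $\langle\nabla H,x^\top\rangle=-\tfrac12A(x^\top,x^\top)$, which exactly cancels the drift term $\tfrac12\langle x^\top,\nabla u\rangle=\tfrac12A(x^\top,x^\top)$ in $\Delta_\phi u$. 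We obtain
\[
\Delta_\phi u=H-|A|^2u=H_\phi-\tfrac12u-|A|^2u=H_\phi-Pu ,
\]
which is \eqref{eq:Deltau}. The main thing to watch in this step is the sign conventions for $A$ and $\nu$, so that the cancellation occurs.

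\textbf{Jacobi operator identities.} Using $L_\phi=\Delta_\phi+P-1$:
\[
L_\phi V=V-H_\phi u+(P-1)V=PV-H_\phi u,
\]
\[
L_\phi u=H_\phi-Pu+(P-1)u=H_\phi-u .
\]
Since $H_\phi$ is constant,
\[
L_\phi F=L_\phi V-H_\phi L_\phi u=PV-H_\phi u-H_\phi^2+H_\phi u=PV-H_\phi^2 ,
\]
which gives \eqref{eq:LV}, \eqref{eq:Lu} and \eqref{eq:LF}.
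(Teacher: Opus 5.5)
Your proposal is correct and follows essentially the same route as the paper: you use the Euclidean formulas for $\Delta\frac{|x|^2}{2}$ and $\Delta\langle x,\nu\rangle$, cancel the $\langle x^\top,\nabla H\rangle$ term against the drift term via constancy of $H_\phi$, and then read off the $L_\phi$ identities from $L_\phi=\Delta_\phi+P-1$. You simply make explicit the Codazzi derivation and the cancellation, which the paper leaves implicit.
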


\begin{proof}
The identity in \eqref{eq:grad-identities} follow directly from the definitions. Moreover,
\[
\Delta\frac{|x|^2}{2}=m-H\langle x,\nu\rangle,
\]
and therefore
\[
\Delta_\phi V
=m-H\langle x,\nu\rangle+\frac12|x^\top|^2
=V-H_\phi \langle x,\nu\rangle,
\]
which proves \eqref{eq:DeltaV}.

For the support function $\langle x,\nu\rangle$, the Euclidean formula is
\[
\Delta \langle x,\nu\rangle=H-|A|^2 \langle x,\nu\rangle+\langle x^\top,\nabla H\rangle.
\]

Using \eqref{eq:operators}, we obtain
\[
\Delta_\phi \langle x,\nu\rangle
=H-|A|^2 \langle x,\nu\rangle
=H_\phi-\left(|A|^2+\frac12\right)\langle x,\nu\rangle,
\]
which is \eqref{eq:Deltau}. Equations \eqref{eq:LV} and \eqref{eq:Lu} follow immediately from $L_\phi=\Delta_\phi+P-1$. Finally, since $H_\phi$ is constant,
\[
L_\phi F
=L_\phi V-H_\phi L_\phi \langle x,\nu\rangle
=PV-H_\phi^2,
\]
which proves \eqref{eq:LF}.
\end{proof}

\section{Proof of the main theorem}\label{sec:proof}

By Lemma \ref{lem:Minkowski}, the function $F$ in \eqref{eq:basic-definitions} satisfies
\[
\int_\Sigma F\,d\sigma_\phi=0,
\]
and is therefore admissible in the stability inequality \eqref{eq:stability}.

We first note a pointwise result from Cauchy-Schwarz.

\begin{proposition}\label{lem:positivity}
On $\Sigma$,
\begin{equation}\label{eq:LF-positive}
L_\phi F=PV-H_\phi^2\ge\frac12P|x^\top|^2\ge0.
\end{equation}
\end{proposition}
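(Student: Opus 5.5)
The plan is to prove \eqref{eq:LF-positive} pointwise. Its first equality is \eqref{eq:LF} of Lemma \ref{lem:differential}, so only the inequality needs an argument. Write $u:=\langle x,\nu\rangle$ and split the position vector into tangential and normal parts, $|x|^2=|x^\top|^2+u^2$, so that
\[
V=m+\frac{u^2}{2}+\frac{|x^\top|^2}{2}.
\]
It then suffices to show
\[
H_\phi^2\le P\Bigl(m+\frac{u^2}{2}\Bigr).
\]
Indeed, subtracting this from $PV$ leaves exactly $\frac12P|x^\top|^2$, which is nonnegative since $P\ge\frac12>0$.

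For the key inequality, view $H_\phi=H+\frac12 u$ as a sum of two terms and apply a weighted Cauchy--Schwarz inequality with weights matched to the two summands of $P=|A|^2+\frac12$. When $|A|\neq0$,
\[
\Bigl(H+\tfrac12u\Bigr)^2=\Bigl(|A|\cdot\tfrac{H}{|A|}+\tfrac{1}{\sqrt2}\cdot\tfrac{u}{\sqrt2}\Bigr)^2\le\Bigl(|A|^2+\tfrac12\Bigr)\Bigl(\tfrac{H^2}{|A|^2}+\tfrac{u^2}{2}\Bigr).
\]
Then the trace inequality $H^2\le m|A|^2$ (Cauchy--Schwarz on the principal curvatures) gives $H^2/|A|^2\le m$, hence $H_\phi^2\le P(m+u^2/2)$.

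At points where $|A|=0$ we also have $H=0$ and $P=\frac12$. There $H_\phi^2=\frac{u^2}{4}=P\cdot\frac{u^2}{2}\le P(m+\frac{u^2}{2})$ directly, so this degenerate case needs only a one-line separate check. Alternatively, one can apply the Cauchy--Schwarz step with $|A|^2$ replaced by $|A|^2+\varepsilon$ and let $\varepsilon\to0$.

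The only real point is choosing the Cauchy--Schwarz weights so that the factor on the right is exactly $P$ and the remaining bound $m+u^2/2$ is dominated by $V$, with the leftover $\frac12|x^\top|^2$. Once that split is chosen, everything else is routine; the computation uses only the pointwise definitions and needs no integration or constancy of $H_\phi$ beyond what \eqref{eq:LF} already used.
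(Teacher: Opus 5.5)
Your proof is correct and follows the paper's argument. You use the same splitting $V=m+\frac{u^2}{2}+\frac{|x^\top|^2}{2}$ and the same reduction, via Cauchy--Schwarz, to $H_\phi^2\le P\bigl(m+\frac{u^2}{2}\bigr)$. The paper does this in one step by pairing $(\kappa_1,\ldots,\kappa_m,\frac1{\sqrt2})$ with $(1,\ldots,1,\frac{u}{\sqrt2})$ in a principal frame, which combines your two-term Cauchy--Schwarz with the trace inequality $H^2\le m|A|^2$ and so avoids the separate $|A|=0$ case.
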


\begin{proof}
Write
\[
V=m+\frac{\langle x,\nu\rangle^2}{2}+\frac{|x^\top|^2}{2}.
\]
At a fixed point choose an orthonormal principal frame with principal curvatures $\kappa_1,\ldots,\kappa_m$. Applying Cauchy--Schwarz to
\[
a=\left(\kappa_1,\ldots,\kappa_m,\frac1{\sqrt2}\right),
\qquad
b=\left(1,\ldots,1,\frac{\langle x,\nu\rangle}{\sqrt2}\right),
\]
we obtain
\[
H_\phi^2
=\left(H+\frac12 \langle x,\nu\rangle\right)^2
\le\left(|A|^2+\frac12\right)
\left(m+\frac{\langle x,\nu\rangle^2}{2}\right).
\]
Hence
\[
PV-H_\phi^2\ge\frac12P|x^\top|^2.
\]
\end{proof}

\begin{proposition}\label{prop:integral-identity}
Assume that $H_\phi$ is constant. Then
\[
Q(F,F)
=\int_\Sigma\Bigl[H_\phi A(x^\top,x^\top)-VL_\phi F\Bigr]d\sigma_\phi.
\]
\end{proposition}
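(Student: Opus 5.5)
The plan is to expand $Q(F,F)=-\int_\Sigma F\,L_\phi F\,d\sigma_\phi$ using the definition $F=V-H_\phi\langle x,\nu\rangle$ and the constancy of $H_\phi$. Writing $u:=\langle x,\nu\rangle$, this splits as
\[
Q(F,F)=-\int_\Sigma V\,L_\phi F\,d\sigma_\phi+H_\phi\int_\Sigma u\,L_\phi F\,d\sigma_\phi .
\]
The proposition is then equivalent to the single integral identity
\[
\int_\Sigma u\,L_\phi F\,d\sigma_\phi=\int_\Sigma A(x^\top,x^\top)\,d\sigma_\phi .
\]
This is the only step needing work, and I would attack it with integration by parts for the weighted measure.

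First I would use that $L_\phi$ is symmetric with respect to $d\sigma_\phi$ on the closed hypersurface $\Sigma$, since $\Delta_\phi$ is the drift Laplacian associated with $d\sigma_\phi$. Together with \eqref{eq:Lu} this gives
\[
\int_\Sigma u\,L_\phi F\,d\sigma_\phi=\int_\Sigma F\,L_\phi u\,d\sigma_\phi=\int_\Sigma F\,(H_\phi-u)\,d\sigma_\phi .
\]
By Lemma \ref{lem:Minkowski}, $\int_\Sigma F\,d\sigma_\phi=0$, so this reduces to
\[
-\int_\Sigma F u\,d\sigma_\phi=\int_\Sigma\bigl(H_\phi u^2-Vu\bigr)\,d\sigma_\phi .
\]

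Next I would compute the right-hand side $\int_\Sigma A(x^\top,x^\top)\,d\sigma_\phi$ independently. With the convention $A(X,Y)=\langle\overline\nabla_X\nu,Y\rangle$, differentiating the support function gives
\[
\nabla_X u=\langle X,\nu\rangle+\langle x,\overline\nabla_X\nu\rangle=A(X,x^\top),
\]
so $\nabla u=A(x^\top,\cdot)$. By \eqref{eq:grad-identities}, $\nabla V=x^\top$, and hence
\[
A(x^\top,x^\top)=\langle\nabla u,\nabla V\rangle .
\]
Integrating by parts weightedly and applying \eqref{eq:DeltaV} yields
\[
\int_\Sigma\langle\nabla u,\nabla V\rangle\,d\sigma_\phi=-\int_\Sigma u\,\Delta_\phi V\,d\sigma_\phi=-\int_\Sigma u\,(V-H_\phi u)\,d\sigma_\phi .
\]
This is exactly the expression obtained in the previous step, which proves the identity.

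Substituting back gives the claimed formula. The only delicate point is bookkeeping: the sign convention for $A$ in $\nabla u=A(x^\top,\cdot)$, and the correct use of $\int_\Sigma F\,d\sigma_\phi=0$ to discard the $H_\phi F$ term. I expect no genuine obstacle beyond checking these signs.
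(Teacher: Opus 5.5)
Your proof is correct and follows the paper's structure: the same splitting $Q(F,F)=-\int_\Sigma V L_\phi F\,d\sigma_\phi+H_\phi\int_\Sigma \langle x,\nu\rangle L_\phi F\,d\sigma_\phi$, and the same key identity $\int_\Sigma \langle x,\nu\rangle L_\phi F\,d\sigma_\phi=\int_\Sigma A(x^\top,x^\top)\,d\sigma_\phi$, proved via the Minkowski formula and $\langle\nabla V,\nabla\langle x,\nu\rangle\rangle=A(x^\top,x^\top)$. The only difference is cosmetic: you move $L_\phi$ onto $\langle x,\nu\rangle$ by self-adjointness and integrate by parts against $\Delta_\phi V$, whereas the paper expands $\int_\Sigma PV\langle x,\nu\rangle\,d\sigma_\phi$ using $\Delta_\phi\langle x,\nu\rangle=H_\phi-P\langle x,\nu\rangle$, so your version never handles the $|A|^2$ terms explicitly.
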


\begin{proof}
From
\[
\Delta_\phi\langle x,\nu\rangle
=
H_\phi-P\langle x,\nu\rangle
\]
and $\nabla V=x^\top$, integration by parts gives
\[
\begin{aligned}
\int_\Sigma
V\bigl(H_\phi-P\langle x,\nu\rangle\bigr)\,d\sigma_\phi
&=
\int_\Sigma
V\Delta_\phi\langle x,\nu\rangle\,d\sigma_\phi\\
&=
-\int_\Sigma
\left\langle\nabla V,
\nabla\langle x,\nu\rangle\right\rangle\,d\sigma_\phi\\
&=
-\int_\Sigma
A(x^\top,x^\top)\,d\sigma_\phi.
\end{aligned}
\]
Hence
\[
\int_\Sigma
PV\langle x,\nu\rangle\,d\sigma_\phi
=
H_\phi\int_\Sigma V\,d\sigma_\phi
+
\int_\Sigma A(x^\top,x^\top)\,d\sigma_\phi.
\]

By the Lemma \ref{lem:Minkowski},
we obtain
\[
\begin{aligned}
\int_\Sigma
\langle x,\nu\rangle L_\phi F\,d\sigma_\phi
&=
\int_\Sigma
PV\langle x,\nu\rangle\,d\sigma_\phi
-
H_\phi^2
\int_\Sigma\langle x,\nu\rangle\,d\sigma_\phi\\
&=
\int_\Sigma
A(x^\top,x^\top)\,d\sigma_\phi.
\end{aligned}
\]

So,
\[
\begin{aligned}
Q(F,F)&=-\int_\Sigma V L_\phi F\,d\sigma_\phi
+H_\phi\int_\Sigma \langle x,\nu\rangle L_\phi F\,d\sigma_\phi\\
&=\int_\Sigma\Bigl[H_\phi A(x^\top,x^\top)-VL_\phi F\Bigr]d\sigma_\phi.
\end{aligned}
\]
\end{proof}

The following pointwise estimate is the key step.

\begin{proposition}\label{prop:key}
On $\Sigma$,
\begin{equation}\label{eq:key}
VL_\phi F=V(PV-H_\phi^2)\ge H_\phi A(x^\top,x^\top).
\end{equation}
Moreover, the inequality is strict whenever $x^\top\neq0$.
\end{proposition}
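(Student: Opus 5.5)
The plan is to prove \eqref{eq:key} by completing the square in $H_\phi$ and then running a weighted Cauchy--Schwarz argument, refining the one in Proposition \ref{lem:positivity} so that the tangential part $|x^\top|^2$ of $V$ absorbs the cross term. At a point with $x^\top=0$ the inequality reduces to $V(PV-H_\phi^2)\ge0$, which is Proposition \ref{lem:positivity}. So I will assume $t:=|x^\top|^2>0$ and write $u:=\langle x,\nu\rangle$ and $e:=x^\top/|x^\top|$. Then $A(x^\top,x^\top)=t\,a$ with $a:=A(e,e)$.

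\emph{Completing the square.} A direct expansion gives
\[
V(PV-H_\phi^2)-tH_\phi a=V\Bigl[PV-\Bigl(H_\phi+\frac{ta}{2V}\Bigr)^2\Bigr]+\frac{t^2a^2}{4V}.
\]
The last term is nonnegative, so it suffices to prove $PV\ge \bigl(H_\phi+\frac{ta}{2V}\bigr)^2$, strictly when $t>0$.

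\emph{Splitting the mean curvature.} Complete $e$ to an orthonormal frame $e=e_1,e_2,\dots,e_m$ of $T\Sigma$. Put $b:=\sum_{i\ge2}A(e_i,e_i)$. Then
\[
|A|^2\ge a^2+\sum_{i\ge2}A(e_i,e_i)^2\ge a^2+\frac{b^2}{m-1}.
\]
With $s:=t/(2V)$ we have $H_\phi+sa=(1+s)a+b+\tfrac u2$. Applying Cauchy--Schwarz to the vectors $\bigl(a,\,b/\sqrt{m-1},\,1/\sqrt2\bigr)$ and $\bigl(1+s,\,\sqrt{m-1},\,u/\sqrt2\bigr)$ gives
\[
(H_\phi+sa)^2\le\Bigl(a^2+\frac{b^2}{m-1}+\frac12\Bigr)\Bigl((1+s)^2+m-1+\frac{u^2}{2}\Bigr)\le P\Bigl(m+\frac{u^2}{2}+2s+s^2\Bigr).
\]
Since $V=m+\frac{u^2}{2}+\frac t2$ and $P\ge\frac12>0$, the whole estimate reduces to the scalar inequality $2s+s^2<\frac t2$, that is,
\[
\frac1V+\frac{t}{4V^2}<\frac12\iff V^2-2V-\frac t2>0 .
\]

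\emph{The scalar inequality.} This is the step where the hypothesis $m\ge2$ enters, and where I expect the real content of the argument to sit; the square completion is the idea that makes it reducible to this. Since $V\ge m+\frac t2\ge2+\frac t2$, we get
\[
V(V-2)\ge\Bigl(2+\frac t2\Bigr)\frac t2\ge t>\frac t2 \qquad\text{whenever } t>0.
\]
Consequently $PV-(H_\phi+sa)^2\ge P\bigl(\frac t2-2s-s^2\bigr)>0$. Multiplying by $V>0$ and adding back the term $t^2a^2/(4V)\ge0$ yields \eqref{eq:key}, with strict inequality whenever $x^\top\ne0$.

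The main obstacle is the choice of how to absorb the cross term $tH_\phi a$. A cruder route bounds $|H_\phi a|\le\sqrt{PW}\sqrt P$ with $W=m+u^2/2$, and this only closes when $W\ge4$, so it fails for small $m$. The square completion, combined with keeping $a$ separate from the rest of the trace in the Cauchy--Schwarz step, is what makes the estimate work for every $m\ge2$. If some constant turned out to be off, I would instead treat the expression as a quadratic form in $(a,b,1)$ and verify its positivity directly, using $V\ge2+t/2$.
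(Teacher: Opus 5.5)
Your proof is correct, and it reaches the same scalar inequality as the paper, $V^2-2V-\frac{t}{2}\ge0$ with $t=|x^\top|^2$, derived from $V\ge 2+\frac t2$. The decomposition that gets you there is different, though.

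The paper starts from the exact identity
$V^2P-VH_\phi^2=\sum_{i,j}(Vh_{ij}-H_\phi\delta_{ij})^2+\frac12\bigl(V-\langle x,\nu\rangle H_\phi\bigr)^2+\frac t2H_\phi^2$.
It discards everything except the $(1,1)$ square and the last term, then completes the square in $h_{11}$. The leftover is $\frac{t}{2V^2}\bigl(V^2-2V-\frac t2\bigr)H_\phi^2$.

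You instead complete the square in $H_\phi$ and run a Cauchy--Schwarz that separates $a=h_{11}$ from the rest of the trace $b$, so that the extra weight $s$ lands only on $a$. After multiplying by $V$, your leftover is $\frac{tP}{2V}\bigl(V^2-2V-\frac t2\bigr)+\frac{t^2a^2}{4V}$.

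The paper's route is a bit more economical: one algebraic identity and one completed square. However, its remainder carries the factor $H_\phi^2$, so its proof of the strict part invokes $H_\phi\neq0$. That comes from the Minkowski formula, a global fact about closed $\Sigma$.

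Your remainder carries $P\ge\frac12$ in place of $H_\phi^2$. As a result, strictness whenever $x^\top\neq0$ is purely pointwise and does not depend on knowing $H_\phi\neq0$.
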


\begin{proof}
If $x^\top=0$, then $A(x^\top,x^\top)=0$, and \eqref{eq:key} follows from Proposition \ref{lem:positivity}. Assume $x^\top\neq0$, and choose an orthonormal tangent frame $\{e_i\}_{i=1}^m$ with
\[
e_1=\frac{x^\top}{|x^\top|},
\qquad h_{ij}=A(e_i,e_j).
\]
Then $A(x^\top,x^\top)=|x^\top|^2h_{11}$. Since $2H+\langle x,\nu\rangle=2H_\phi$,
\[
\begin{aligned}
VL_\phi F
&=V^2P-VH_\phi^2\\
&=\sum_{i,j=1}^m(Vh_{ij}-H_\phi\delta_{ij})^2
+\frac12(V-\langle x,\nu\rangle H_\phi)^2
+\frac{|x^\top|^2}{2}H_\phi^2.
\end{aligned}
\]
Therefore
\[
VL_\phi F\ge(Vh_{11}-H_\phi)^2+\frac{|x^\top|^2}{2}H_\phi^2.
\]
It follows that
\[
\begin{aligned}
VL_\phi F-H_\phi A(x^\top,x^\top)
&\ge(Vh_{11}-H_\phi)^2
+\frac{|x^\top|^2}{2}H_\phi^2
-|x^\top|^2H_\phi h_{11}\\
&=\left[Vh_{11}-\frac{V+|x^\top|^2/2}{V}H_\phi\right]^2+\frac{|x^\top|^2}{2V^2}
\left(V^2-2V-\frac{|x^\top|^2}{2}\right)H_\phi^2.
\end{aligned}
\]
Since $m\ge2$,
\[
V=m+\frac{\langle x,\nu\rangle^2}{2}+\frac{|x^\top|^2}{2}
\ge2+\frac{|x^\top|^2}{2},
\]
and hence
\[
V^2-2V-\frac{|x^\top|^2}{2}
\ge\frac{|x^\top|^2}{2}(V-1)\ge0.
\]
This proves \eqref{eq:key}. If $x^\top\neq0$, then the last term is strictly positive because $V>1$ and $H_\phi\neq0$ by Lemma \ref{lem:Minkowski}.
\end{proof}

\begin{proof}[Proof of Theorem \ref{thm:main}]
By stability,
\[
Q(F,F)\ge0.
\]
On the other hand, Proposition \ref{prop:integral-identity} and Proposition \ref{prop:key} give
\[
Q(F,F)\le0.
\]
Thus $Q(F,F)=0$, and
\[
\int_\Sigma\Bigl[VL_\phi F-H_\phi A(x^\top,x^\top)\Bigr]d\sigma_\phi=0.
\]
The integrand is nonnegative and, by the strict part of Proposition \ref{prop:key}, it is positive wherever $x^\top\neq0$. Therefore
\[
x^\top\equiv0.
\]
Consequently $\nabla|x|^2=2x^\top=0$, so $|x|\equiv R$ for some $R>0$. Hence the image of $x$ is contained in $S_R^m(0)$. Since $x:\Sigma\to S_R^m(0)$ is an immersion of the same dimension and $\Sigma$ is compact and connected, its image is both open and closed in $S_R^m(0)$. Moreover, $x:\Sigma\to S_R^m(0)$ is a covering map.
Since $m\ge2$, the sphere $S_R^m(0)$ is simply connected.
Hence $x$ is a diffeomorphism. Therefore
\[
\Sigma=S_R^m(0).
\]

It remains to justify the consequence for local minimizers. Let $E$ be a
finite-perimeter weighted $L^1$-local minimizer with
\[
0<V_\phi(E)<\infty,
\qquad
P_\phi(E)<\infty.
\]
Here
\[
V_\phi(E):=\int_E e^{|x|^2/4}\,dx,
\qquad
P_\phi(E;U):=
\int_{\partial^*E\cap U}e^{|x|^2/4}\,d\mathcal H^m,
\]
and $P_\phi(E):=P_\phi(E;\mathbb R^{m+1})$.

By Lemma \ref{lem:compact-localmin}, the support of the perimeter measure
is compact. The regularity theory for volume-constrained perimeter
minimizers \cite{MorganRegularity,GMT,RosalesCaneteBayleMorgan} gives
\[
\operatorname{spt}|D\chi_E|=\Sigma\cup\mathcal S,
\qquad
\dim_{\mathcal H}\mathcal S\le m-7,
\]
where $\Sigma$ is the smooth embedded regular part. The first variation
gives one constant value of $H_\phi$ on all regular components, and local
minimality gives weighted-volume-preserving stability.

Since the boundary is compact, the density and its inverse are uniformly
bounded near $\operatorname{spt}|D\chi_E|$. Thus weighted and unweighted
$W^{1,2}$ capacities are equivalent there. The cutoff argument of
Sternberg--Zumbrun \cite{SternbergZumbrun}, together with its weighted
version in \cite{RosalesStableSets}, applies across $\mathcal S$. By the
standard approximation argument used in these references, the Minkowski
identity, the stability inequality, and the integrations by parts above
extend to the regular part. Hence
\begin{equation}\label{eq:localmin-three-identities}
\int_\Sigma F\,d\sigma_\phi=0,
\qquad
Q(F,F)\ge0,
\qquad
Q(F,F)=
\int_\Sigma
\Bigl[H_\phi A(x^\top,x^\top)-VL_\phi F\Bigr]d\sigma_\phi.
\end{equation}
Proposition \ref{prop:key} therefore gives
\[
0\le Q(F,F)\le0,
\]
and the strict part of the proposition yields
\[
x^\top=0
\qquad\text{on }\Sigma.
\]

Thus every regular component is contained in a centered sphere. Since
$H_\phi$ has the same constant value $\lambda$ on all components, every
possible radius $R$ satisfies
\[
\frac{m}{R}+\frac{R}{2}=|\lambda|,
\]
so only finitely many radii can occur. Applying the constancy theorem
\cite{GMT} to the integral boundary current
$\partial[[E]]$ in annular neighborhoods separating these radii shows
that each nonzero component is a whole centered sphere with multiplicity
one. If two distinct concentric components were present, two successive
components would have opposite orientations and hence opposite signs of
$H_\phi$, contradicting the constancy of $H_\phi$. Therefore
$\partial^*E$ consists, up to an $\mathcal H^m$-null set, of a single
sphere $S_R^m(0)$.

It follows that $E$ agrees, up to a null set, with either $B_R(0)$ or its
complement. Since the complement has infinite anti-Gaussian weighted
volume,
\[
E=B_R(0)
\]
up to a null set. This proves the statement for local minimizers.
\end{proof}

\appendix
\section{Compactness of local minimizers}\label{app:localmin}

The regularity and cutoff arguments used above are standard and are
covered by \cite{MorganRegularity,GMT,SternbergZumbrun,RosalesStableSets}.
We only record the compactness fact that uses the growth of the
anti-Gaussian density.

\begin{lemma}\label{lem:compact-localmin}
Let $E\subset\mathbb R^{m+1}$ be a finite-perimeter weighted
$L^1$-local minimizer with
\[
0<V_\phi(E)<\infty,
\qquad
P_\phi(E)<\infty.
\]
Then $\operatorname{spt}|D\chi_E|$ is compact.
\end{lemma}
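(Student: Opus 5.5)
The plan is to argue by comparison with centered balls, using the growth of the density $e^{|x|^2/4}$ to exclude boundary pieces of $E$ far from the origin, and then to bound the support by a density (monotonicity) estimate. Two facts are available: $V_\phi(E)<\infty$, so $|E\setminus B_R|\to0$ rapidly, and $P_\phi(E)<\infty$, so $\int_{\partial^*E\setminus B_R}e^{|x|^2/4}\,d\mathcal H^m\to0$ as $R\to\infty$.

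\emph{Step 1: a lower density bound.} I will show that $E$ is an (unweighted) almost-minimizer of perimeter on bounded sets. On a ball $B_r(y)$ with $r\le1$, the density $e^{|x|^2/4}$ has oscillation factor $e^{C|y|r}$. A competitor $E\triangle F$ with $F\Subset B_r(y)$ has to be volume-corrected. I will do this with a fixed small deformation in a region $K$ of $\partial^*E$ near the origin, where the regular part exists because $V_\phi(E)>0$. This costs $C|V_\phi(E\triangle F)-V_\phi(E)|$ in weighted perimeter, and the constant $C$ depends only on $E$ near $K$. Dividing by $e^{|y|^2/4}$ then gives a $(\Lambda,r_0)$-minimality estimate in $B_r(y)$, with $\Lambda$ controlled and $r_0\sim 1/(1+|y|)$. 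The standard consequence is a lower bound
\[
\mathcal H^m(\partial^*E\cap B_r(y))\ge c\,r^m,\qquad y\in\operatorname{spt}|D\chi_E|,\ r\le r_0(y).
\]

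\emph{Step 2: contradiction.} Suppose the support were unbounded, with points $y_k$ of the support and $|y_k|\to\infty$. Applying Step 1 at $y_k$ with radius $r_k=c/|y_k|$, the weighted perimeter in $B_{r_k}(y_k)$ is at least roughly $c\,|y_k|^{-m}e^{(|y_k|-1)^2/4}$, and this tends to $\infty$. That contradicts $P_\phi(E)<\infty$. Hence the support lies in a bounded set, and since supports are closed it is compact.

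\emph{Main obstacle.} The delicate part is Step 1: making the volume-fixing uniform while tracking how the almost-minimality constant $\Lambda$ depends on $|y|$. The local weighted volume change is at most of order $e^{|y|^2/4}r^{m+1}$. After normalizing by $e^{|y|^2/4}$, the correction term becomes $C\,r^{m+1}$ times a factor that could be huge relative to the local scale. I would first test whether this term is harmless at the scale $r_0\sim e^{-|y|^2/8}$. If it is, I would redo Step 2 with that smaller radius. The lower bound is then $c\,r_0^m e^{|y|^2/4}$, which still diverges because the exponent is $(1/4-m/8)|y|^2$. This is positive only when $m<2$, so that version fails. The fallback is to avoid volume fixing altogether: compare $E$ with $E\setminus B_\rho(y)$ and use the weighted volume constraint only through the Lagrange multiplier $|\lambda|$, which is well defined from the regular part. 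This gives $\Lambda\lesssim|\lambda|$, a constant independent of $y$. With that, $r_0$ can be taken comparable to $\min(1,1/|\lambda|)$, and the divergence in Step 2 is immediate.
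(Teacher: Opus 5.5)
Your overall strategy (a density lower bound at far-away points of $\operatorname{spt}|D\chi_E|$, contradicted by $P_\phi(E)<\infty$) is the same as the paper's. However, Step~1 fails as stated, and you discarded the fix.

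\textbf{Step~1 and the choice of scale.} Two-sided $(\Lambda,r_0)$-minimality on $B_r(y)$ with $r_0\sim1/(1+|y|)$ is not available. A competitor supported in $B_r(y)$ may change the weighted volume by up to $V_\phi(B_r(y))\approx e^{|y|^2/4}r^{m+1}$, which is enormous for $r\sim 1/|y|$. Neither the $L^1$-local minimality nor a fixed volume-fixing deformation near $K$ can handle that.

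After dividing by $e^{|y|^2/4}$, the correction term is only $\le C\Lambda r^{m+1}$, which is harmless. The real constraint is admissibility, $e^{|y|^2/4}r^{m+1}\lesssim\delta$. This dictates the scale $r=\varepsilon\exp\bigl(-|y|^2/(4(m+1))\bigr)$, not $e^{-|y|^2/8}$. At this scale:
\begin{itemize}
\item $|y|r\to0$, so the normalized weight $e^{(|x|^2-|y|^2)/4}$ is uniformly elliptic with $C^1$ bounds after rescaling;
\item every competitor in the ball is admissible;
\item the density theorem gives $P_\phi(E;B_r(y))\ge c\,e^{|y|^2/4}r^m=c\,\varepsilon^m\exp\bigl(|y|^2/(4(m+1))\bigr)\to\infty$ for every $m$.
\end{itemize}
That is exactly the paper's proof. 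Your exponent $(1/4-m/8)|y|^2$ came from over-shrinking the ball: at $r=e^{-|y|^2/8}$ its weighted volume is $\approx e^{(1-m)|y|^2/8}$, which already tends to $0$ for $m\ge2$.

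\textbf{The fallback.} As written, it does not repair this. A Lagrange multiplier is first-order information along smooth variations. It does not give $P_\phi(E)\le P_\phi(F)+\Lambda|V_\phi(F)-V_\phi(E)|$ for a finite competitor like $E\setminus B_\rho(y)$, which violates the constraint. So you cannot ``avoid volume fixing altogether''; the bound $\Lambda\approx|\lambda|$ is an output of volume fixing, not a substitute for it.

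What would make the route legitimate is your opening remark: $V_\phi(E\cap B_\rho(y))\le V_\phi(E\setminus B_{|y|-1}(0))\to0$. Hence $E\setminus B_\rho(y)$ can be volume-fixed near $K$ and stays $L^1$-close to $E$. But only this one-sided competitor is admissible, since $E\cup B_\rho(y)$ may add weighted volume $\sim e^{|y|^2/4}\rho^{m+1}$. You therefore get only $|E\cap B_\rho(y)|\ge c\rho^{m+1}$ for $\rho\le c/|y|$, not the two-sided perimeter bound that Step~2 uses.

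That one-sided bound would in fact suffice. It forces $V_\phi(E\cap B_{c/|y|}(y))\gtrsim|y|^{-(m+1)}e^{(|y|-1)^2/4}\to\infty$, contradicting the vanishing tail of $V_\phi(E)$. This would be a genuinely different route from the paper's: it works at the larger scale $1/|y|$ and contradicts finite volume rather than finite perimeter. But it is not the argument you wrote.
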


\begin{proof}
By the standard volume-fixing argument for volume-constrained perimeter
minimizers
\cite{MorganRegularity,GMT,RosalesCaneteBayleMorgan},
$E$ is an almost-minimizer of the weighted perimeter away from a fixed
compact set. Suppose that
$p_j\in\operatorname{spt}|D\chi_E|$ and $|p_j|\to\infty$, and set
\[
r_j=
\varepsilon
\exp\left(-\frac{|p_j|^2}{4(m+1)}\right).
\]
Since $|p_j|r_j\to0$, the normalized densities
\[
\frac{e^{|p_j+r_jy|^2/4}}{e^{|p_j|^2/4}}
\]
have uniform ellipticity and $C^1$ bounds on $B_2(0)$. Moreover,
\[
V_\phi(B_{r_j}(p_j))
\le C e^{|p_j|^2/4}r_j^{m+1}
=C\varepsilon^{m+1},
\]
so $\varepsilon$ can be chosen so that the local minimizing property
applies in these balls. The density theorem for local minimizers of
area-type functionals \cite{CongedoTamanini} then gives a constant
$c>0$, independent of $j$, such that
\[
P_\phi(E;B_{r_j}(p_j))
\ge
c\,e^{|p_j|^2/4}r_j^m
=
c\varepsilon^m
\exp\left(\frac{|p_j|^2}{4(m+1)}\right).
\]
The right-hand side tends to infinity, contradicting
$P_\phi(E)<\infty$. Hence
$\operatorname{spt}|D\chi_E|$ is bounded, and therefore compact.
\end{proof}

\bibliographystyle{plain}
\bibliography{References.bib}

@article{Alexandrov,
  author  = {A. D. Alexandrov},
  title   = {Uniqueness theorems for surfaces in the large. {I}},
  journal = {Vestnik Leningrad University},
  volume  = {11},
  number  = {19},
  pages   = {5--17},
  year    = {1956}
}

@article{AncariCheng,
  author  = {S. Ancari and X. Cheng},
  title   = {Some rigidity properties for $\lambda$-self-expanders},
  journal = {Nonlinear Anal.},
  volume  = {230},
  pages   = {113230},
  year    = {2023}
}

@article{BaiXia,
  author  = {J. Bai and C. Xia},
  title   = {An Alexandrov-type theorem in warped product manifolds with radial density},
  journal = {arXiv:2608.08548},
  year    = {2026}
}

@article{BarbosaDoCarmo,
  author  = {J. L. Barbosa and M. do Carmo},
  title   = {Stability of hypersurfaces with constant mean curvature},
  journal = {Math. Z.},
  volume  = {185},
  number  = {3},
  pages   = {339--353},
  year    = {1984}
}

@article{BarbosaDoCarmoEschenburg,
  author  = {J. L. Barbosa and M. do Carmo and J. Eschenburg},
  title   = {Stability of hypersurfaces of constant mean curvature in Riemannian manifolds},
  journal = {Math. Z.},
  volume  = {197},
  number  = {1},
  pages   = {123--138},
  year    = {1988}
}

@article{BernsteinWang,
  author  = {J. Bernstein and L. Wang},
  title   = {The space of asymptotically conical self-expanders of mean curvature flow},
  journal = {Math. Ann.},
  volume  = {380},
  number  = {1},
  pages   = {175--230},
  year    = {2021}
}

@article{CastroRosales,
  author  = {K. Castro and C. Rosales},
  title   = {Free boundary stable hypersurfaces in manifolds with density and rigidity results},
  journal = {J. Geom. Phys.},
  volume  = {79},
  pages   = {14--28},
  year    = {2014}
}

@article{Chambers,
  author  = {G. R. Chambers},
  title   = {Proof of the log-convex density conjecture},
  journal = {J. Eur. Math. Soc.},
  volume  = {21},
  number  = {8},
  pages   = {2301--2332},
  year    = {2019}
}

@article{DeruelleSchulze,
  author  = {A. Deruelle and F. Schulze},
  title   = {Generic uniqueness of expanders with vanishing relative entropy},
  journal = {Math. Ann.},
  volume  = {377},
  number  = {3},
  pages   = {1095--1127},
  year    = {2020}
}

@article{Ding,
  author  = {Q. Ding},
  title   = {Minimal cones and self-expanding solutions for mean curvature flows},
  journal = {Math. Ann.},
  volume  = {376},
  number  = {1},
  pages   = {359--405},
  year    = {2020}
}

@article{FigalliMaggi,
  author  = {A. Figalli and F. Maggi},
  title   = {On the isoperimetric problem for radial log-convex densities},
  journal = {Calc. Var. Partial Differential Equations},
  volume  = {48},
  number  = {3--4},
  pages   = {447--489},
  year    = {2013}
}

@article{GuoWangXia,
  author  = {J. Guo and G. Wang and C. Xia},
  title   = {Stable capillary hypersurfaces supported on a horosphere in the hyperbolic space},
  journal = {Adv. Math.},
  volume  = {409},
  pages   = {108641},
  year    = {2022}
}

@article{JohneSilini,
  author  = {F. Johne and L. Silini},
  title   = {Alexandrov theorem for constant weighted mean curvature surfaces},
  journal = {arXiv:2608.05119},
  year    = {2026}
}

@article{McGonagleRoss,
  author  = {M. McGonagle and J. Ross},
  title   = {The hyperplane is the only stable, smooth solution to the isoperimetric problem in Gaussian space},
  journal = {Geom. Dedicata},
  volume  = {178},
  number  = {1},
  pages   = {277--296},
  year    = {2015}
}

@article{Montiel,
  author  = {S. Montiel},
  title   = {Stable constant mean curvature hypersurfaces in some Riemannian manifolds},
  journal = {Comment. Math. Helv.},
  volume  = {73},
  number  = {4},
  pages   = {584--602},
  year    = {1998}
}

@article{MorganRegularity,
  author  = {F. Morgan},
  title   = {Regularity of isoperimetric hypersurfaces in Riemannian manifolds},
  journal = {Trans. Amer. Math. Soc.},
  volume  = {355},
  number  = {12},
  pages   = {5041--5052},
  year    = {2003}
}

@book{GMT,
  author    = {F. Morgan},
  title     = {Geometric Measure Theory: A Beginner's Guide},
  edition   = {4},
  publisher = {Elsevier/Academic Press},
  address   = {Amsterdam},
  year      = {2009}
}

@article{SternbergZumbrun,
  author  = {P. Sternberg and K. Zumbrun},
  title   = {A Poincar{\'e} inequality with applications to volume-constrained area-minimizing surfaces},
  journal = {J. Reine Angew. Math.},
  volume  = {503},
  pages   = {63--85},
  year    = {1998}
}

@article{MorganDensity,
  author  = {F. Morgan},
  title   = {Manifolds with density},
  journal = {Notices Amer. Math. Soc.},
  volume  = {52},
  number  = {8},
  pages   = {853--858},
  year    = {2005}
}

@article{MorganRos,
  author  = {F. Morgan and A. Ros},
  title   = {Stable constant-mean-curvature hypersurfaces are area minimizing in small $L^1$ neighborhoods},
  journal = {Interfaces Free Bound.},
  volume  = {12},
  number  = {2},
  pages   = {151--155},
  year    = {2010}
}

@article{RitoreRos,
  author  = {M. Ritor{\'e} and A. Ros},
  title   = {Stable constant mean curvature tori and the isoperimetric problem in three space forms},
  journal = {Comment. Math. Helv.},
  volume  = {67},
  number  = {2},
  pages   = {293--305},
  year    = {1992}
}

@article{RosalesStableSets,
  author  = {C. Rosales},
  title   = {Isoperimetric and stable sets for log-concave perturbations of Gaussian measures},
  journal = {Anal. Geom. Metr. Spaces},
  volume  = {2},
  pages   = {328--358},
  year    = {2014}
}

@article{RosalesCaneteBayleMorgan,
  author  = {C. Rosales and A. Ca{\~n}ete and V. Bayle and F. Morgan},
  title   = {On the isoperimetric problem in Euclidean space with density},
  journal = {Calc. Var. Partial Differential Equations},
  volume  = {31},
  number  = {1},
  pages   = {27--46},
  year    = {2008}
}

@article{WangXia,
  author  = {G. Wang and C. Xia},
  title   = {Uniqueness of stable capillary hypersurfaces in a ball},
  journal = {Math. Ann.},
  volume  = {374},
  number  = {3--4},
  pages   = {1845--1882},
  year    = {2019}
}

@article{ST,
  author  = {V. N. Sudakov and B. S. Tsirel'son},
  title   = {Extremal properties of half-spaces for spherically invariant measures},
  journal = {J. Soviet Math.},
  volume  = {9},
  pages   = {9--18},
  year    = {1978}
}

@article{Borell,
  title={The brunn-minkowski inequality in gauss space},
  author={Borell, Christer},
  journal={Inventiones mathematicae},
  volume={30},
  number={2},
  pages={207--216},
  year={1975},
  publisher={Springer-Verlag Berlin/Heidelberg}
}

@article{BaiLuXia,
  author = {J. Bai and C. Lu and C. Xia},
  title  = {Capillary isoperimetric inequality in a Gaussian half-space},
  note   = {submitted}
}

@article{CongedoTamanini,
  author  = {I. Tamanini and G. Congedo},
  title   = {Density theorems for local minimizers of area-type functionals},
  journal = {Rend. Sem. Mat. Univ. Padova},
  volume  = {85},
  pages   = {217--248},
  year    = {1991}
}

@article{Souam,
  title={On Stable Capillary Hypersurfaces with Planar Boundaries: R. Souam},
  author={Souam, Rabah},
  journal={The Journal of Geometric Analysis},
  volume={33},
  number={6},
  pages={196},
  year={2023},
  publisher={Springer}
}

\end{document}